\newtheorem{definition}{Definition}
\newtheorem{assumption}{Assumption}
\newtheorem{theorem}{Theorem}
\newtheorem{remark}{Remark}
\newtheorem{lem}{Lemma}
\newcommand{\Min}{\displaystyle\min}
\newcommand{\Sum}{\displaystyle\sum}
\title{\LARGE \bf
Inertial Measurements Based Velocity-free Attitude Stabilization }
\author{
A. Tayebi, A. Roberts, A. Benallegue
\thanks{
Partial and preliminary results related to this work have been presented in \cite{Tayebi_ACC_2011}.
This work is supported by the Natural Sciences and Engineering Research Council of Canada (NSERC).}
\thanks{A. Tayebi and A. Roberts are with the Department of Electrical and Computer Engineering, University of Western Ontario, London, Ontario, Canada.
A. Tayebi is also with the Department of Electrical Engineering, Lakehead University, Thunder Bay, Ontario, Canada.
A. Benallegue is with LISV, Versailles, France.
        {\tt\small atayebi@lakeheadu.ca, arober88@uwo.ca, benalleg@lisv.uvsq.fr} }
}
\begin{document}
\maketitle \thispagestyle{empty} \pagestyle{empty}

\begin{abstract}
The existing attitude controllers (without angular velocity measurements) involve explicitly the orientation (\textit{e.g.,} the unit-quaternion) in the feedback. Unfortunately, there does not exist any sensor that directly measures the orientation of a rigid body, and hence, the attitude must be reconstructed using a set of inertial vector measurements as well as the angular velocity (which is assumed to be unavailable in velocity-free control schemes). To overcome this \textit{circular reasoning}-like problem, we propose a velocity-free attitude stabilization control scheme relying solely on inertial vector measurements. The originality of this control strategy stems from the fact that the reconstruction of the attitude as well as the angular velocity measurements are not required at all. Moreover, as a byproduct of our design approach, the proposed controller does not lead to the unwinding phenomenon encountered in unit-quaternion based attitude controllers.
\end{abstract}

\section{Introduction}
The attitude control problem of rigid bodies has been widely studied over the last decades. The interest devoted to this problem is motivated by its technical challenges as well as its practical implications in many areas such aerospace and marine engineering. The main technical difficulty encountered in this type of mechanical systems may be attributed to the fact that the orientation (angular position) of the rigid body is not a straightforward integration of the angular velocity. Several interesting solutions to the attitude control problem have been proposed in the literature (see, for instance, \cite{Chaturvedi}, \cite{Lizarraide}, \cite{TayebiTAC08}, \cite{Wen}). \\
As it is customary in the position control of mechanical systems, the majority of the control schemes developed for rigid bodies are (roughly speaking) of Proportional-Derivative (PD) type, where the proportional action is in terms of the orientation and the derivative action (generating the necessary damping) is in terms of the angular velocity. The requirement of the angular velocity can be removed through an appropriate design, usually based on the passivity properties of the system,
as done in \cite{Costic}, \cite{Egeland}, \cite{Lizarraide}, \cite{Salcudean}, \cite{TayebiTAC08} and \cite{Tsiotras}, for instance.
Since there is no sensor that directly measures the orientation, the explicit use of the attitude (\textit{e.g.,} the unit-quaternion) in the control law calls for
the development of suitable attitude estimation algorithms that reconstruct the attitude from the
measurements provided by some appropriate sensors (such as inertial measurements units (IMUs)).
Therefore, the integration of the attitude estimation algorithm with the attitude controller has to be taken with extra care as the separation principle does not generally hold for non-linear systems.

Initially, the attitude determination  from vector observations, has been tackled as a static optimization problem for which several solutions, based on Wahba's problem, have been proposed \cite{Shuster2}. These algorithms have been refined, later on, incorporating filtering techniques of Kalman-type to handle the measurement noise (see, for instance, \cite{Choukroun, Shuster3} and the survey paper \cite{Crassidis}).
On the other hand, probably the most simple and yet practical dynamic attitude estimation approach is the linear complementary filtering (see, for instance, \cite{Tayebi}),
where the angular velocity and the inertial vector measurements are fused (via some appropriate linear filters) to recover the orientation of the rigid body for small angular movements.
This approach has been extended to nonlinear complementary filtering for the attitude estimation
from vector measurements in \cite{Hua,Mahony,Martin1,Martin2,Roberts}.

In fact, the existing dynamic estimation algorithms (involving signals filtering) make use of the angular velocity information to reconstruct the orientation of the rigid body. Therefore, a natural question that may arise is whether it makes sense to use velocity-free attitude controllers such as those proposed, for instance, in \cite{Lizarraide}, \cite{TayebiTAC08} and \cite{Tsiotras}, since the angular velocity will be used (anyways) to recover the attitude via dynamic estimation algorithms. In this context, the main contribution of the present paper, is the development of a new attitude stabilization control scheme that uses explicitly inertial vector measurements without requiring (either directly or indirectly) the velocity measurement. This controller is  a \textit{true velocity-free} scheme since neither the angular velocity nor the orientation are used in the control law. Moreover, as it will be shown later, the unwinding phenomenon\footnote{Using the unit-quaternion representation, the same attitude can be represented by two different unit-quaternion, namely $Q$ and $-Q$. This fact, often leads to the so-called unwinding phenomenon, which is undesirable since some closed-loop trajectories, for some initial conditions close to the desired attitude equilibrium, can undergo an unnecessary homoclinic-like orbit \cite{Bhat,Chaturvedi}.} (inherent to the quaternion representation) is avoided in our approach.

\section{Rigid-body attitude representation and equations of motion}

The attitude (orientation) of a rigid body can be represented in several ways, among which, one can recall the rotation matrix $R$ evolving in the special orthogonal group of degree three,
\[
SO(3)=\{R \in \mathbb{R}^{3 \times 3}~|~R^TR=RR^T=I_{3}, det(R)=1\},
\]
where $I_3$ is a 3-by-3 identity matrix.
Another globally non-singular representation of the attitude consists of using four-dimensional vectors $Q$, called unit-quaternion, evolving in the three-sphere $\mathbb{S}^3$, embedded in $\mathbb{R}^{4}$, $\mathbb{S}^3=\{Q \in \mathbb{R}^{4}~|~Q^TQ=1\}$.\\
A unit-quaternion $Q=(q_0,q^T)^T$ is composed of a scalar component $q_0 \in \mathbb{R}$ and a vector component $q \in \mathbb{R}^{3}$, such that $q_0^2+q^Tq=1$. A rotation matrix $R$ describing a rotation by an angle $\theta$ about the unit-vector $\hat{k}\in \mathbb{R}^3$, can be represented by the unit-quaternion $Q$ or $-Q$ such that $q_0=\cos(\theta/2)$ and $q=\sin(\theta/2)\hat{k}$. Note that the mapping from $SO(3)$ to $\mathbb{S}^3$ is not a one-to-one mapping as there are two unit quaternion that represent the rotation matrix $R$.
The rotation matrix $R$ (that describes the orientation of the body-attached frame with respect to the inertial frame) is related to the unit-quaternion $Q=(q_0,q^T)^T$ through the Rodriguez formula $R=\mathcal{R}(Q)$ (see, for instance, \cite{Hughes, Shuster}). The mapping $\mathcal{R}: \mathbb{S}^3\rightarrow SO(3)$ is given by
\begin{equation}\label{R}
\begin{array}{rcl}
\mathcal{R}(Q)&=&I_3+2q_0 S(q)+2S(q)^2=(q_0^{2}-q^{T}q)I_3+2qq^{T}+2q_0 S(q),
\end{array}
\end{equation}
where $S:\mathbb{R}^3 \rightarrow \mathfrak{so}(3)$, such that
\[
S(x)=\left[\begin{array}{ccc}
0&-x_3& x_2\\
x_3& 0&-x_1\\
-x_2& x_1& 0\\
\end{array} \right].
\]
with $x=[x_1,x_2,x_3]^T \in \mathbb{R}^3$, and $\mathfrak{so}(3)=\{S \in\mathbb{R}^{3\times3}~|~S^T=-S\}$ is the set of 3-by-3 skew symmetric matrices. Given a rotation matrix $R$ and two vectors $x,y\in\mathbb{R}^3$, we have the following useful properties: $S(x)y=-S(y)x=x\times y$, $S(x)x=0$, $S(x)S(y)=yx^T-(x^Ty)I_3$ and $S(Rx)=RS(x)R^T$, where $\times$ denotes the vector cross product.\\
The three-sphere (where the unit-quaternion evolve) has a natural Lie group structure given by the quaternion multiplication (which is distributive and associative but not commutative) denoted by ``$\odot$''. The multiplication of two quaternion $P=(p_0,p^T)^T$ and $Q=(q_0,q^T)^T$ is defined as
\begin{equation}
P \odot Q=\left[\begin{array}{c}
p_0q_0-p^Tq\\
p_0q+q_0p+p\times q
\end{array}\right]
\label{quatmul}
\end{equation}
and has the quaternion $(1,\bold{0}_3^T)^T$ as the identity element. Given a unit-quaternion $Q=(q_0,q^T)^T$, we have $Q\odot Q^{-1}=Q^{-1}\odot Q=(1,\bold{0}_3^T)^T$, where $Q^{-1}=(q_0,-q^T)^T$ and $\bold{0}_3$ is 3-dimensional zero vector.

Throughout this paper, $\bar{x}:=(0,x^T)^T$ will denote the quaternion associated to the three-dimensional vector $x$. A vector $x_{\mathcal{I}}$ expressed in the inertial frame $\mathcal{I}$ can be expressed in the body frame $\mathcal{B}$ by $x_{\mathcal{B}}=R^T x_{\mathcal{I}}$ or equivalently in terms of unit-quaternion as $\bar{x}_{\mathcal{B}}=  Q^{-1}\odot \bar{x}_{\mathcal{I}} \odot Q $, where $\bar{x}_{\mathcal{I}}=(0, x_{\mathcal{I}}^T)^T$, $\bar{x}_{\mathcal{B}}=(0, x_{\mathcal{B}}^T)^T$, and $Q$ is the unit-quaternion associated to $R$ as per (\ref{R}). \\
The rigid body attitude kinematics, in terms of the rotation matrix, are given by $\dot{R}=RS(\omega)$, and, in terms of the unit-quaternion, by
\begin{equation}\label{quat_dyn}
\dot{Q}:=\left[ \begin{array}{c}
\dot{q}_0\\
\dot{q}
\end{array}\right] =\frac{1}{2} Q \odot\bar{\omega}=\left[
  \begin{array}{c}
  -\frac{1}{2}q^T\omega\\
  \frac{1}{2}(q_0I_3+S(q))\omega
  \end{array} \right]
 \end{equation}
with $\omega$ being the angular velocity of the rigid body expressed in the body-attached frame
$\mathcal{B}$.\\
The rigid body rotational dynamics are governed by
\begin{equation}\label{rot_dyn}
J\dot{\omega}=\tau-S(\omega)J\omega,
 \end{equation}
were $J\in\mathbb{R}^{3\times3}$ is a symmetric positive definite constant inertia matrix of rigid body with respect to the body attached frame $\mathcal{B}$, and $\tau$ is the external torque applied to the system expressed in $\mathcal{B}$.

\section{Problem Statement}\label{section_attitude_estimation}
  We assume that the rigid body is equipped with at least two inertial sensors that provide measurements (in the body-attached frame) of constant and known inertial vectors $r_i\in\mathbb{R}^3$, $i=1,\hdots,n\geq2$, satisfying the following assumption:
  \begin{assumption}\label{assum}
 At least two vectors, among the $n$ inertial vectors, are not collinear.
  \end{assumption}
\vskip 0.08in
The vector measurements in the body-attached frame are denoted by $b_i\in\mathbb{R}^3$, $i=1,\hdots,n$. The vectors $r_i$ and $b_i$ are related by $b_i=R^Tr_i$. We assume that the angular velocity $\omega$ of rigid body as well as the attitude ($Q$ or $R$) are unknown (unavailable for feedback). The only variables available for feedback are the inertial vector measurements. Our objective is to design a control input $\tau$ (using only the vector measurements) guaranteeing almost global asymptotic stability of the equilibrium points characterized by $(q_0=\pm1,q=0,\omega=0)$ or, equivalently, $(R=I_3,\omega=0)$. Since the motion on $SO(3)$ (which is not a contractible space), is hampered by some well known topological obstructions that preclude global asymptotic stability results via time-invariant continuous state feedback (see, for instance, \cite{Bhat, Koditschek}), the notion of almost global asymptotic stability is commonly used in this context and is defined as follows:
\begin{definition}
An equilibrium point $x^*$ of a dynamical system is said to be almost globally asymptotically stable if it is stable and all the trajectories starting in some open dense subset of the state space converge to $x^*$.
\end{definition}

\section{Main results}
\subsection{Preliminary results}
  Let us define the following dynamic auxiliary system:
   \begin{equation}\label{obser1}
  \dot{\hat{Q}}= \frac{1}{2}\hat{Q} \odot \bar{\beta},
  \end{equation}
  with an arbitrary initial condition $\hat{Q}(0)\in \mathbb{S}^3$, where $\bar{\beta}:=(0,\beta^T)^T$ with $\beta$ being a design variable to be defined later in Theorem 1. \\
  Let us define the vectors $\hat{b}_i$, $i=1,\hdots,n$, as $\hat{b}_i=\mathcal{R}(\hat{Q})^T r_i$ or, equivalently, $\bar{\hat{b}}_{i}=\hat{Q}^{-1} \odot \bar{r}_{i} \odot \hat{Q}$, corresponding to the vector $r_i$ in the frame attached to the auxiliary system (\ref{obser1}).\\
  Let $\tilde{R}:=\mathcal{R}(\tilde{Q})=\mathcal{R}(Q)\mathcal{R}(\hat{Q})^T $ denote the discrepancy between the actual rigid-body orientation and the orientation of the auxiliary system (\ref{obser1}), which corresponds to the unit quaternion error $\tilde{Q}:=(\tilde{q}_0, \tilde{q}^T)^T=Q\odot\hat{Q}^{-1}$ whose dynamics is governed by
  \begin{equation}\label{err_quat}
  \begin{array}{rcl}
  \dot{\tilde{Q}}&=&\frac{1}{2}\tilde{Q} \odot \bar{\tilde{\omega}}=\left[
  \begin{array}{c}
  -\frac{1}{2}\tilde{q}^T \tilde{\omega}\\
  \frac{1}{2}(\tilde{q}_0I_3+S(\tilde{q}))\tilde{\omega}
  \end{array} \right]
 \end{array}
  \end{equation}
 where $\tilde{\omega}=\mathcal{R}(\hat{Q})(\omega-\beta)$. \\
In the sequel, for the sake of simplicity, the arguments associated to the rotation matrices will be omitted. It is understood that $R$, $\hat{R}$ and $\tilde{R}$ correspond, respectively, to the rotation matrices $\mathcal{R}(Q)$, $\mathcal{R}(\hat{Q})$ and $\mathcal{R}(\tilde{Q})$, associated, respectively, to the unit quaternion $Q$, $\hat{Q}$ and $\tilde{Q}$.\\
Before stating our main results, let us define the following variables:  $z_\gamma:=\sum_{i=1}^n \gamma_iS(\hat{b}_i)b_i$, $z_\rho:=\sum_{i=1}^n \rho_i S(r_i)b_i$, $W_{\gamma}:=-\sum_{i=1}^{n} \gamma_i S(r_i)^2$ and $W_{\rho}:=-\sum_{i=1}^{n} \rho_i S(r_i)^2$, where $\gamma_i>0$ and $\rho_i>0$ for $i=1,\hdots,n$, and let us state the following useful Lemmas:
\begin{lem}\label{lem1}
The following equalities hold:
\begin{equation}\label{zgamma}
z_\gamma=-2\hat{R}^T(\tilde{q}_0I-S(\tilde{q}))W_{\gamma}\tilde{q},
\end {equation}
\begin{equation}\label{zrho}
z_\rho=-2(q_0I-S(q))W_{\rho} q,
\end{equation}
\end{lem}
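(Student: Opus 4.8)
The plan is to establish \eqref{zrho} by a direct computation and then obtain \eqref{zgamma} from it almost for free, by recognizing that $z_\gamma$ has the same algebraic structure as $z_\rho$ with $R$ replaced by the error rotation $\tilde R$.

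First, for \eqref{zrho}, I would substitute $b_i=R^Tr_i$ and use the transpose of the Rodriguez formula \eqref{R}, namely $R^T=I_3-2q_0S(q)+2S(q)^2$ (the symmetric part is unchanged while the $S(q)$ term flips sign). Expanding $S(r_i)R^Tr_i$ produces three terms. The first, $S(r_i)r_i$, vanishes by $S(x)x=0$. For the second, the identity $S(q)r_i=q\times r_i=-S(r_i)q$ gives $S(r_i)S(q)r_i=-S(r_i)^2q$, so the $q_0$-term contributes $2q_0S(r_i)^2q$. For the third, $S(q)^2=qq^T-(q^Tq)I_3$ (from $S(x)S(y)=yx^T-(x^Ty)I_3$) yields $S(r_i)S(q)^2r_i=(q^Tr_i)S(r_i)q$, again using $S(r_i)r_i=0$. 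Summing with the weights $\rho_i$ and recalling $\sum_i\rho_iS(r_i)^2=-W_\rho$, I obtain $z_\rho=-2q_0W_\rho q+2\sum_i\rho_i(q^Tr_i)S(r_i)q$.

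Comparing this with the target $-2(q_0I-S(q))W_\rho q=-2q_0W_\rho q+2S(q)W_\rho q$, the claim for $z_\rho$ reduces to the single cancellation identity $\sum_i\rho_i(q^Tr_i)S(r_i)q=-S(q)\sum_i\rho_iS(r_i)^2q$, which I would verify term by term: expanding $S(r_i)^2q=(r_i^Tq)r_i-(r_i^Tr_i)q$ and applying $S(q)q=0$ together with $S(q)r_i=-S(r_i)q$ collapses $-S(q)S(r_i)^2q$ exactly onto $(q^Tr_i)S(r_i)q$. This skew-symmetric cancellation is the real crux of the lemma; everything else is bookkeeping.

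For \eqref{zgamma}, I would write $b_i=R^Tr_i=\hat R^T\tilde R^Tr_i$ (since $\tilde R=R\hat R^T$ gives $R^T=\hat R^T\tilde R^T$) and $\hat b_i=\hat R^Tr_i$, so that the conjugation identity $S(\hat R^Tr_i)=\hat R^TS(r_i)\hat R$ (a case of $S(Rx)=RS(x)R^T$) gives $S(\hat b_i)b_i=\hat R^TS(r_i)\tilde R^Tr_i$ after cancelling $\hat R\hat R^T=I_3$. Factoring $\hat R^T$ out of the sum then yields $z_\gamma=\hat R^T\sum_i\gamma_iS(r_i)\tilde R^Tr_i$, and the inner sum is precisely $z_\rho$ with $\rho_i$ replaced by $\gamma_i$ and $R$ replaced by $\tilde R=\mathcal{R}(\tilde Q)$. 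Since the derivation of \eqref{zrho} used only the Rodriguez formula for a generic unit quaternion, it applies verbatim with $(q_0,q)$ replaced by $(\tilde q_0,\tilde q)$, giving $\sum_i\gamma_iS(r_i)\tilde R^Tr_i=-2(\tilde q_0I-S(\tilde q))W_\gamma\tilde q$; premultiplying by $\hat R^T$ produces exactly \eqref{zgamma}. The main obstacle is thus confined to the skew-symmetric algebra in the cancellation identity above; the passage from \eqref{zgamma} to \eqref{zrho} is purely structural once the composition $R^T=\hat R^T\tilde R^T$ and the conjugation formula for $S$ are in hand.
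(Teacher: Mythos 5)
Your proof is correct and follows essentially the same route as the paper's: both rest on the conjugation identity $S(\hat R^T r_i)=\hat R^T S(r_i)\hat R$ to factor $z_\gamma=\hat R^T\sum_{i=1}^n\gamma_iS(r_i)\tilde R^Tr_i$, and both then expand the Rodriguez formula using $S(x)x=0$ and $S(x)y=-S(y)x$ together with the relation between $\sum_i\rho_ir_ir_i^T$ and $W_\rho$ (your ``cancellation identity'' is exactly the paper's substitution $M_\gamma=\mu I_3-W_\gamma$ combined with $S(\tilde q)\tilde q=0$). The only difference is organizational: you prove $z_\rho$ first and obtain $z_\gamma$ as its relabeled instance premultiplied by $\hat R^T$, whereas the paper proves $z_\gamma$ in full and dismisses $z_\rho$ as ``similar steps.''
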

\begin{proof}
Using the facts that $b_i=R^Tr_i$, $\hat{b}_i=\hat{R}^Tr_i$ and $S(\hat{R}^Tr_i)=\hat{R}^TS(r_i)\hat{R}$, we have
\begin{equation}\label{z_gamma_proof}
z_\gamma:=\sum_{i=1}^n \gamma_iS(\hat{b}_i)b_i=\hat{R}^T\sum_{i=1}^n \gamma_iS(r_i)\tilde{R}^Tr_i.
\end{equation}
Rewriting $\tilde{R}$ in terms of the unit-quaternion $\tilde{Q}$, \textit{i.e.,} $\tilde{R}=\mathcal{R}(\tilde{Q})$, and using the facts that $S(r_i)r_i=0$ and $S(\tilde{q})r_i=-S(r_i)\tilde{q}$, one gets
\begin{equation}\label{zgproof}
z_\gamma=2\hat{R}^T\sum_{i=1}^n \gamma_iS(r_i)(\tilde{q}\tilde{q}^{T}-\tilde{q}_0 S(\tilde{q}))r_i=-2\hat{R}^T(S(\tilde{q})M_\gamma+\tilde{q}_0W_{\gamma})\tilde{q},
\end{equation}
where $M_\gamma=\sum_{i=1}^n \gamma_i r_ir_i^T$. One can also show that $M_\gamma=\mu I_3-W_{\gamma}$, $\mu=\sum_{i=1}^n \gamma_i r_i^Tr_i$. Substituting the expression of $M_\gamma$ in (\ref{zgproof}), yields (\ref{zgamma}).
Similar steps can be used to obtain $z_\rho$.
\end{proof}

\begin{lem}\label{lem2}
Under Assumption \ref{assum}, the matrices $W_{\gamma}$ and $W_{\rho}$ are positive definite.
\end{lem}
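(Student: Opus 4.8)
The plan is to rewrite the quadratic form $x^{T}W_{\gamma}x$ as a weighted sum of squares and then invoke Assumption~\ref{assum} to rule out nontrivial null vectors. First I would use the skew-symmetry $S(r_i)^{T}=-S(r_i)$ to observe that, for any $x\in\mathbb{R}^3$,
\[
x^{T}\big(-S(r_i)^{2}\big)x = \big(S(r_i)x\big)^{T}\big(S(r_i)x\big)=\|S(r_i)x\|^{2}=\|r_i\times x\|^{2},
\]
where the last equality uses the property $S(r_i)x=r_i\times x$ stated in the preliminaries. Summing over $i$ with the positive weights $\gamma_i$ then gives $x^{T}W_{\gamma}x=\sum_{i=1}^{n}\gamma_i\|r_i\times x\|^{2}\geq 0$, so $W_{\gamma}$ is at least positive semidefinite. (Alternatively, the same conclusion follows from the identity $S(r_i)^{2}=r_ir_i^{T}-(r_i^{T}r_i)I_3$, which yields $-S(r_i)^{2}=\|r_i\|^{2}I_3-r_ir_i^{T}$ and, via Cauchy--Schwarz, a nonnegative contribution from each term.)

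It remains to establish definiteness, which is where Assumption~\ref{assum} enters and which I regard as the only substantive step. Suppose $x^{T}W_{\gamma}x=0$. Since each summand $\gamma_i\|r_i\times x\|^{2}$ is nonnegative and $\gamma_i>0$, this forces $r_i\times x=0$ for every $i$, i.e. $x$ is collinear with each $r_i$. If $x\neq 0$, then every $r_i$ would be collinear with $x$, hence all the $r_i$ would be mutually collinear, contradicting Assumption~\ref{assum}, which guarantees two non-collinear vectors among them. Therefore $x=0$, and $W_{\gamma}$ is positive definite.

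The argument for $W_{\rho}$ is identical, replacing $\gamma_i$ by $\rho_i>0$; no feature of the proof distinguishes the two matrices. The main (and essentially the only) obstacle is the definiteness step: one must verify that the geometric content of Assumption~\ref{assum}---two directions spanning a plane rather than a single line---is exactly what prevents a common null direction shared by all the maps $x\mapsto r_i\times x$. The positive-semidefinite part is a routine sum-of-squares computation.
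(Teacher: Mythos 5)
Your proof is correct and follows essentially the same route as the paper: both establish positive semidefiniteness of each term $-\gamma_i y^{T}S(r_i)^{2}y$ (the paper via the expansion $\|r_i\|^{2}\|y\|^{2}-(y^{T}r_i)^{2}$ and Cauchy--Schwarz, you via the sum-of-squares identity $\|S(r_i)y\|^{2}=\|r_i\times y\|^{2}$, which are equivalent), and both then use the fact that a vanishing term forces collinearity of $y$ with $r_i$, so that Assumption~\ref{assum} rules out any common nonzero null direction. The only cosmetic difference is that the paper isolates the two non-collinear vectors $r_1,r_2$ and lumps the rest into a positive semidefinite remainder, while you force all cross products to vanish; the substance is identical.
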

\begin{proof}
For any $y\in\mathbb{R}^3$ we have
\[
-y^TS(r_i)^2y=y^T(r_i^Tr_i)y-y^Tr_ir_i^Ty=\|r_i\|^2\|y\|^2-(y^Tr_i)^2 \geq 0,
\]
which shows that $-S(r_i)^2 \geq 0$. Now, without loss of generality, assuming that $r_1$ and $r_2$ are the two non-collinear vectors, one has
\begin{equation}\label{positive}
y^TW_{\gamma}y=-y^T\left(\sum_{i=1}^{n}\gamma_iS(r_i)^2\right)y=-\gamma_1y^TS(r_1)^2y-\gamma_2y^TS(r_2)^2y+y^T\Xi y,
\end{equation}
where $\Xi=-\sum_{i=3}^n \gamma_i S(r_i)^2 \geq 0$. It is clear that $y^TS(r_1)^2y=0$ is equivalent to $S(r_1)y=r_1 \times y=0$, which is satisfied, for some $y\not=0$, if and only if $y$ and $r_1$ are collinear. Similarly, for $y\not=0$, $y^TS(r_2)^2y=0$ if and only if $y$ and $r_2$ are collinear. Since $r_1$ and $r_2$ are non-collinear, they cannot be both collinear to the same $y$, and hence,  $-\gamma_1y^TS(r_1)^2y-\gamma_2y^TS(r_2)^2y>0$ for all $y\not=0$. Consequently, from (\ref{positive}), one can conclude that $y^TW_{\gamma}y>0$ for all $y\not=0$, which ends the proof.
\end{proof}

\begin{lem}\label{lem3}
Under Assumption \ref{assum}, the following statements hold:
\begin{itemize}
\item[$i)$] $z_\gamma=0$ is equivalent to $(\tilde{q}_0=0,\tilde{q}= v_{\gamma})$  or $(\tilde{q}_0=\pm1,\tilde{q}=0)$,
            were $v_{\gamma}$ are the unit eigenvectors of $W_{\gamma}$.
\item[$ii)$]    $z_\rho=0$ is equivalent to $(q_0=0,q=v_{\rho})$ or $(q_0=\pm1, q=0)$, were $v_{\rho}$ are the unit eigenvectors of $W_{\rho}$.
 \end{itemize}
 \end{lem}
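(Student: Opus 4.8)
The plan is to start from the closed-form expressions for $z_\gamma$ and $z_\rho$ provided by Lemma~\ref{lem1} and to exploit the positive definiteness of $W_\gamma$ and $W_\rho$ established in Lemma~\ref{lem2}. Since $\hat{R}=\mathcal{R}(\hat{Q})$ is a rotation matrix and therefore invertible, (\ref{zgamma}) shows that $z_\gamma=0$ is equivalent to the reduced equation
\[
(\tilde{q}_0 I-S(\tilde{q}))W_\gamma\tilde{q}=0,
\]
subject to the unit-norm constraint $\tilde{q}_0^2+\tilde{q}^T\tilde{q}=1$. The same reduction applies to $z_\rho$ through (\ref{zrho}) (where no rotation factor is even present), so I would prove statement $i)$ in full and then obtain $ii)$ by relabelling $W_\gamma,\tilde{q}_0,\tilde{q}$ as $W_\rho,q_0,q$.

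The key step, which I expect to carry the whole argument, is to left-multiply the reduced equation by $\tilde{q}^T$. Because $S(\tilde{q})^T\tilde{q}=-S(\tilde{q})\tilde{q}=0$, one has $\tilde{q}^T S(\tilde{q})=0$, so the skew part drops out and the equation collapses to the scalar identity
\[
\tilde{q}_0\,\tilde{q}^T W_\gamma\tilde{q}=0.
\]
By Lemma~\ref{lem2}, $\tilde{q}^T W_\gamma\tilde{q}>0$ whenever $\tilde{q}\neq 0$, and this forces the dichotomy: either $\tilde{q}=0$, or $\tilde{q}_0=0$.

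In the first branch, $\tilde{q}=0$ combined with the unit-norm constraint yields $\tilde{q}_0=\pm1$, which is the second alternative in $i)$. In the second branch, setting $\tilde{q}_0=0$ reduces the original equation to $S(\tilde{q})W_\gamma\tilde{q}=\tilde{q}\times(W_\gamma\tilde{q})=0$, meaning $W_\gamma\tilde{q}$ is collinear with $\tilde{q}$; hence $\tilde{q}$ is an eigenvector of $W_\gamma$, and since now $\tilde{q}^T\tilde{q}=1$, it is a unit eigenvector $v_\gamma$. This gives the first alternative. For the converse I would substitute each candidate back: $\tilde{q}=0$ annihilates $W_\gamma\tilde{q}$ outright, while $(\tilde{q}_0,\tilde{q})=(0,v_\gamma)$ gives $(0\cdot I-S(v_\gamma))W_\gamma v_\gamma=-\lambda\,S(v_\gamma)v_\gamma=0$, using $W_\gamma v_\gamma=\lambda v_\gamma$ and $S(v_\gamma)v_\gamma=0$. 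Both directions then hold, establishing $i)$, and $ii)$ follows from the identical computation.

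The only point requiring care is the annihilation $\tilde{q}^T S(\tilde{q})=0$, which is precisely what decouples the scalar component and makes the positive-definiteness argument bite; beyond this observation I do not anticipate any genuine analytic obstacle, as the remainder is a direct consequence of Lemma~\ref{lem2} and the unit-quaternion constraint.
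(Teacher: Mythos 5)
Your proposal is correct and follows essentially the same route as the paper's own proof: reduce via Lemma~\ref{lem1} to $(\tilde{q}_0 I-S(\tilde{q}))W_\gamma\tilde{q}=0$, left-multiply by $\tilde{q}^T$ to exploit $\tilde{q}^TS(\tilde{q})=0$ and the positive definiteness from Lemma~\ref{lem2}, then resolve the $\tilde{q}_0=0$ branch through the eigenvector condition $S(\tilde{q})W_\gamma\tilde{q}=0$. Your explicit verification of the converse direction and the remark on the invertibility of $\hat{R}$ are small completions of details the paper leaves implicit, not a different argument.
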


\begin{proof}
Using Lemma 1, $z_\gamma=0$ is equivalent to
\begin{equation}\label{property}
(\tilde{q}_0I-S(\tilde{q}))W_{\gamma}\tilde{q}=0,
\end{equation}
from which one can see that $(\tilde{q}_0=\pm1,\tilde{q}=0)$ is a trivial solution. The other solutions can be obtained by assuming  $\tilde{q} \not=0$ and multiplying (\ref{property}) by $\tilde{q}^T$ to obtain $\tilde{q}_0\tilde{q}^T W_{\gamma} \tilde{q}=0$,
which shows that $\tilde{q}_0=0$ is a solution since $W_{\gamma}$ is positive definite (as per Lemma 2). Now, we need to find the values of $\tilde{q}$ for which $\tilde{q}_0=0$. Using (\ref{property}) with $\tilde{q}_0=0$, we obtain $S(\tilde{q})W_{\gamma}\tilde{q}=0$. Since $W_{\gamma}$ is non-singular, the last equality is satisfied if and only if $W_{\gamma}\tilde{q}=\lambda \tilde{q}$ for some constant scalar $\lambda$, which shows that $\tilde{q}=v_{\gamma}$, where $v_{\gamma}$ are the unit eigenvectors of $W_{\gamma}$. Similar steps can be used to prove $(ii)$ and hence omitted here. This completes the proof.
\end{proof}

\begin{remark}
It worth pointing out that the unit-eigenvectors of $W_{\gamma}$ are also the unit-eigenvectors of  $M_\gamma=\sum_{i=1}^n \gamma_i r_ir_i^T$. This can be easily deduced from the fact that $M_\gamma=\mu I_3-W_{\gamma}$, with $\mu=\sum_{i=1}^n \gamma_i r_i^Tr_i$. Likewise, the unit-eigenvectors of $W_{\rho}$ are also the unit-eigenvectors of  $M_\rho=\sum_{i=1}^n \rho_i r_ir_i^T$. Note also that the eigenvalues of $W_{\gamma}$ (resp. $W_{\rho}$) can be arbitrarily increased by increasing the gains $\gamma_i$ (resp. $\rho_i$).
\end{remark}

\subsection{Vector measurements based attitude stabilization}
We propose the following control law:
 \begin{equation}
\tau=z_\gamma+z_\rho,
\label{feed}
\end{equation}
where $z_\gamma$ and $z_\rho$ are defined in subsection IV-A, and the following input for the auxiliary system (\ref{obser1}):
\begin{equation}
\beta=-z_{\gamma}.
\label{beta}
\end{equation}
Under the proposed control law, the closed loop dynamics are given by
\begin{equation}\label{closed_loop_dynamics}
\begin{array}{rcl}
\dot{\tilde{q}}&=&\frac{1}{2}(\tilde{q}_0I_3+S(\tilde{q}))\hat{R}(\omega+z_\gamma)\\
\dot{q}&=&\frac{1}{2}(q_0I_3+S(q))\omega\\
J\dot{\omega}&=&-S(\omega)J\omega+z_\gamma+z_\rho,
\end{array}
\end{equation}
where $\hat{R}:=\mathcal{R}(\hat{Q})=\mathcal{R}(\tilde{Q})^T\mathcal{R}(Q)$, and
$z_\gamma=-2\mathcal{R}(Q)^T\mathcal{R}(\tilde{Q})(\tilde{q}_0I-S(\tilde{q}))W_{\gamma}\tilde{q}$ and $z_\rho=-2(q_0I-S(q))W_{\rho} q$. Note that the scalar parts of $Q$ and $\tilde{Q}$ that appear in (\ref{closed_loop_dynamics}) are related to the vector parts as follows: $q_0^2=1-q^Tq$ and  $\tilde{q}_0^2=1-\tilde{q}^T\tilde{q}$. It is clear that the closed loop dynamics (\ref{closed_loop_dynamics}) are autonomous.\\
Let $\chi:=(\tilde{q},q,\omega)^T$ be the state vector belonging to the state space $\Upsilon:=D\times D \times \mathbb{R}^3$, with $D:=\{x\in \mathbb{R}^3~|~\|x\|\leq 1\}$. Now, one can state our first theorem.

\begin{theorem}
Consider system (\ref{quat_dyn})-(\ref{rot_dyn}) under the control law (\ref{feed}). Let (\ref{beta}) be the input of (\ref{obser1}).
 Assume that $n$ vector measurements $b_i$, corresponding to the inertial vectors $r_i$, $i=1,\hdots,n\geq 2$ are available, and Assumption \ref{assum} holds. Then,
\begin{itemize}
\item[$1)$] The equilibria of the closed-loop system (\ref{closed_loop_dynamics}) are given by\footnote{Note that $\Omega_2$ is not a single equilibrium point as we have multiple values for $v_\gamma$ (unit eigenvectors of $W_\gamma$). The same remark goes for $\Omega_3$ and $\Omega_4$.}
  $\Omega_1=(0,0,0)$, $\Omega_2=(v_{\gamma},0,0)$, $\Omega_3=(v_{\gamma},v_{\rho},0)$ and $\Omega_4=(0,v_{\rho},0)$, where $v_\gamma$ and $v_\rho$ are, respectively, the unit eigenvectors of $W_\gamma$ and $W_\rho$.
\item[$2)$] The equilibrium point $\Omega_1$ is asymptotically stable with the domain of attraction containing the following domain\footnote{Note that the domain of attraction $\Phi_1$ includes the domain $\Phi_1^l:=\{\chi\in \Upsilon~|~\|\tilde{q}\|^2+\|q\|^2+\frac{\lambda_{max}(J)}{2\lambda_M}\|\omega\|^2<\frac{\lambda_m}{\lambda_M}$\}, with $\lambda_m=2\min\{\lambda_{min}(W_\gamma),\lambda_{min}(W_\rho)\}$, $\lambda_M=2\max\{\lambda_{max}(W_\gamma),\lambda_{max}(W_\rho)\}$, where $\lambda_{max}(\star)$ denotes the maximum eigenvalue of $(\star)$. If the control gains are sufficiently large such that $\lambda_M \gg \lambda_{max}(J)$, the domain $\Phi_1^l$ approaches $\{\chi\in \Upsilon~|~\|\tilde{q}\|^2+\|q\|^2<\frac{\lambda_m}{\lambda_M}$\}.}:
\[
\Phi_1=\left\{\chi\in \Upsilon~|~\chi^TP\chi \leq c \right\},
\]
where $P=\mbox{diag}(2W_\gamma,2W_\rho,\frac{1}{2}J)$, $c<2\min\{\lambda_{min}(W_\gamma),\lambda_{min}(W_\rho)\}$ and $\lambda_{min}(\star)$ is the minimum eigenvalue of $(\star)$.
\item[$3)$] There exist $k_{\rho}>0$ and $k_{\gamma}>0$ such that if $\lambda_{min}(W_{\rho}) >k_{\rho}$ and $\lambda_{min}(W_{\gamma}) >k_{\gamma}$, then $\Omega_2$, $\Omega_3$ and  $\Omega_4$ are unstable, and $\Omega_1$ is almost globally asymptotically stable.
\end{itemize}
\end{theorem}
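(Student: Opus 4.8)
\emph{Plan.} The whole analysis is driven by a single Lyapunov function, namely the quadratic form appearing in $\Phi_1$,
\[
V(\chi)=\chi^TP\chi=2\tilde{q}^TW_\gamma\tilde{q}+2q^TW_\rho q+\tfrac{1}{2}\omega^TJ\omega,
\]
which is positive definite on $\Upsilon$ since $W_\gamma,W_\rho\succ0$ (Lemma \ref{lem2}) and $J\succ0$. Differentiating along (\ref{closed_loop_dynamics}) and inserting the Lemma \ref{lem1} forms of $z_\gamma$ and $z_\rho$, the three contributions telescope: the kinematic term $4\tilde{q}^TW_\gamma\dot{\tilde{q}}$ equals $-z_\gamma^T(\omega+z_\gamma)$ because $2\tilde{q}^TW_\gamma(\tilde{q}_0I+S(\tilde{q}))=-z_\gamma^T\hat{R}^T$ and $\hat{R}^T\hat{R}=I$; likewise $4q^TW_\rho\dot{q}=-z_\rho^T\omega$; and $\omega^TJ\dot{\omega}=\omega^T(z_\gamma+z_\rho)$ since $\omega^TS(\omega)J\omega=0$. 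Summing, the indefinite cross terms cancel and I obtain the dissipation identity
\[
\dot{V}=-\|z_\gamma\|^2\le 0 .
\]
Note that the auxiliary input $\beta=-z_\gamma$ is chosen precisely so that these cancellations occur.

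For statement 1) and for global convergence I would apply LaSalle's invariance principle. Because $\tilde{q},q$ lie in the compact ball $D$ and $V$ is coercive in $\omega$, every sublevel set of $V$ is compact and positively invariant, so all trajectories are bounded and converge to the largest invariant set inside $\{\dot{V}=0\}=\{z_\gamma=0\}$. On this set Lemma \ref{lem3}$(i)$ restricts $\tilde{q}$ to finitely many values ($\tilde{q}=0$ or $\tilde{q}=v_\gamma$), so by continuity $\tilde{Q}$ is constant there; hence $\dot{\tilde{Q}}=\tfrac12\tilde{Q}\odot\bar{\tilde{\omega}}=0$, which forces $\tilde{\omega}=0$ and therefore $\omega=0$ (as $z_\gamma=0$ and $\hat{R}$ is invertible). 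Then $J\dot{\omega}=z_\rho$ with $\omega\equiv0$ yields $z_\rho=0$, and Lemma \ref{lem3}$(ii)$ gives $q\in\{0,v_\rho\}$. Thus the invariant set is exactly $\{\Omega_1,\Omega_2,\Omega_3,\Omega_4\}$, proving 1) and showing that every trajectory converges to one of these equilibria.

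For statement 2), positive definiteness of $V$ together with $\dot{V}\le0$ already gives stability of $\Omega_1$. The set $\Phi_1=\{\chi:\chi^TP\chi\le c\}$ is a compact, positively invariant ellipsoid. Evaluating $V$ at the remaining equilibria gives $V(\Omega_2)=2v_\gamma^TW_\gamma v_\gamma\ge2\lambda_{min}(W_\gamma)$, $V(\Omega_4)=2v_\rho^TW_\rho v_\rho\ge2\lambda_{min}(W_\rho)$, and $V(\Omega_3)$ larger still; with $c<2\min\{\lambda_{min}(W_\gamma),\lambda_{min}(W_\rho)\}$ none of them belongs to $\Phi_1$. Applying LaSalle inside $\Phi_1$, the only equilibrium it contains is $\Omega_1$, so every trajectory starting in $\Phi_1$ converges to $\Omega_1$, which establishes asymptotic stability with domain of attraction containing $\Phi_1$.

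Statement 3) is where the real work lies, and its instability part is the main obstacle. Global convergence to $\{\Omega_1,\dots,\Omega_4\}$ being already in hand, it remains to show $\Omega_2,\Omega_3,\Omega_4$ are unstable and then to argue almost global asymptotic stability. I would prove instability by exhibiting, at each such equilibrium, a linearization with at least one eigenvalue of positive real part. The delicate point is that the coordinates $(\tilde{q},q)$ degenerate exactly there: $\tilde{q}_0=0$ and/or $q_0=0$ puts $\tilde{q}$ or $q$ on the boundary $\|\cdot\|=1$ of $D$, where $\sqrt{1-\|\cdot\|^2}$ is non-smooth, so a naive Jacobian is ill defined. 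I would therefore linearize in a smooth local chart of $\mathbb{S}^3$ around the rotation-by-$\pi$ point (e.g. using $q_0$ and the components of $q$ transverse to $v_\rho$), or equivalently construct a Chetaev function from a potential-decreasing direction of $2q^TW_\rho q$ (resp. $2\tilde{q}^TW_\gamma\tilde{q}$). The role of the gains is exactly this: requiring $\lambda_{min}(W_\rho)>k_\rho$ and $\lambda_{min}(W_\gamma)>k_\gamma$ guarantees that the negative curvature of these potentials at their saddle-type critical points dominates the coupling (in particular the $z_\gamma$ feedback entering $\dot{\tilde{q}}$) and produces a genuine unstable direction; verifying this eigenvalue sign is the technical crux. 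Once each of $\Omega_2,\Omega_3,\Omega_4$ has a nonempty unstable manifold, the set of initial conditions converging to it is contained in a (center-)stable manifold of codimension at least one, hence nowhere dense and of measure zero; the union over these finitely many points is still nowhere dense, so the domain of attraction of $\Omega_1$ is open and dense. Combined with the stability from 2), this yields almost global asymptotic stability of $\Omega_1$.
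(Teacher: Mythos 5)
Your treatment of claims 1) and 2) is sound and matches the paper's own proof: the same Lyapunov function $V(\chi)=\chi^T P\chi$, the same dissipation identity $\dot V=-\|z_\gamma\|^2$ (the paper obtains it via the error vectors $\tilde b_i=\hat b_i-b_i$, $e_i=r_i-b_i$, you via Lemma \ref{lem1} directly, which is equivalent), the same LaSalle chain ($z_\gamma\equiv 0\Rightarrow\tilde Q$ constant $\Rightarrow\omega\equiv 0\Rightarrow z_\rho\equiv 0$), and the same exclusion of $\Omega_2,\Omega_3,\Omega_4$ from the sublevel set $\Phi_1$.

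The genuine gap is in claim 3), which you yourself call the crux and then leave unexecuted. You propose to linearize in a smooth chart (or ``equivalently'' build a Chetaev function) and assert that large gains ``produce a genuine unstable direction,'' but you never exhibit that direction, never define $k_\gamma$ and $k_\rho$, and explicitly defer the eigenvalue-sign verification — which is exactly the content a proof must supply. The paper never linearizes; it sidesteps the chart-degeneracy issue you raise by working with scalar functions. For $\Omega_2$ and $\Omega_3$ (where $\tilde q_0=0$) it uses $V_0=\frac{1}{2}\tilde q_0^2$ and the exact closed-loop equation $\dot{\tilde q}_0=(\tilde q^T W_\gamma\tilde q)\tilde q_0-\frac{1}{2}\tilde q^T\hat R\omega$: the self-reinforcing term $\eta_\gamma\tilde q_0$ (a by-product of the feedback $\beta=-z_\gamma$) dominates the cross term once $\lambda_{min}(W_\gamma)>k_\gamma$, where $k_\gamma$ is the constant in the local bound $\frac{1}{2}\|\omega\|\leq k_\gamma|\tilde q_0|$. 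Crucially, this argument is unavailable at $\Omega_4$: there $\tilde q=0$, so $z_\gamma$ vanishes to first order and $\dot q_0=-\frac{1}{2}q^T\omega$ has no self-reinforcing term. The paper therefore introduces the cross variable $\delta=q^T J\omega$ and the Chetaev function $\mathcal V=-q_0\delta$, computes the near-equilibrium dynamics $\dot q_0=-\frac{1}{2}v_\rho^T\omega$, $\dot\delta=-2\eta_\rho q_0$, bounds $\dot{\mathcal V}\geq\bigl(2\eta_\rho-\frac{1}{2}k_f k_q^2\bigr)q_0^2$ with $k_f=\|J\|$ and $\|\omega\|\leq k_q|q_0|$, sets $k_\rho=\frac{1}{4}k_f k_q^2$, and runs the full Chetaev exit argument. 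Your sketch does not distinguish these two structurally different cases, and without these (or equivalent) constructions the instability of the saddle equilibria — and hence the measure-zero, almost-global conclusion that rests on it — remains unproven.
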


\begin{proof}
Consider the positive definite, radially unbounded, function $V:\Upsilon\rightarrow  \mathbb{R}_{\geq 0}$  such that
\begin{equation}
V(\tilde{q},q,\omega)=2\tilde{q}^TW_{\gamma}\tilde{q}+2q^TW_{\rho}q+\frac{1}{2}\omega^T J \omega,
\label{Lyap}
\end{equation}
where $W_{\gamma}^T=W_{\gamma}>0$, $W_{\rho}^T=W_{\rho}>0$ are defined in subsection IV-A.
Using (\ref{R}) and the properties of the skew symmetric matrix and the unit-quaternion, one can show that $\Sum_{i=1}^{n} \gamma_i \tilde{b}_i^T  \tilde{b}_i=4\tilde{q}^TW_{\gamma}\tilde{q}$ and $\Sum_{i=1}^{n} \rho_i e_i^T e_i=4q^TW_{\rho}q $, where $\tilde{b}_i:=\hat{b}_i-b_i$ and $e_i:=r_i-b_i$.
One can also show that the dynamics of $\tilde{b}_i$ and $e_i$ are governed by
\begin{equation}
\begin{array}{rcl}
\dot{\tilde{b}}_i&=& S(\hat{b}_i)(\beta-\omega)+S(\tilde{b}_i)\omega\\
\dot{e}_i&=&-S(b_i)\omega,
\label{error}
\end{array}
\end{equation}
where we used the fact that $\dot{R}=RS(\omega)$ and $\dot{\hat{R}}=\hat{R}S(\beta)$.\\
Consequently, one can rewrite (\ref{Lyap}) as follows:
\begin{equation}
V=\frac{1}{2}\Sum_{i=1}^{n} \gamma_i \tilde{b}_i^T  \tilde{b}_i+\frac{1}{2}\Sum_{i=1}^{n} \rho_i e_i^T  e_i+\frac{1}{2}\omega^T J \omega,
\end{equation}
whose time-derivative, along the trajectories of (\ref{rot_dyn}) and (\ref{error}), is given by
\begin{equation}
\begin{array}{rcl}
\dot{V}&=&\Sum_{i=1}^{n} \gamma_i \tilde{b}_i^T S(\hat{b}_i )(\beta-\omega)-\Sum_{i=1}^{n} \rho_i e_i^T S(b_i)\omega+\omega^T\tau\\
~&=&\Sum_{i=1}^{n} \gamma_i \tilde{b}_i^T S(\hat{b}_i )(\beta-\omega)-\Sum_{i=1}^{n} \rho_i r_i^T S(b_i)\omega+\omega^T\tau=(\beta-\omega)^Tz_{\gamma}-\omega^Tz_{\rho}+\omega^T\tau,
\label{dotLyapunov}
\end{array}
\end{equation}
which, in view of (\ref{feed}) and (\ref{beta}), yields
\begin{equation}\label{dLyap_control}
  \dot{V}= -z_\gamma^T z_\gamma=-4\tilde{q}^TW_{\gamma}^T(I_3-\tilde{q}\tilde{q}^T)W_{\gamma}\tilde{q},
  \end{equation}
were we used Lemma 1 and the properties of the unit-quaternion and the skew-symmetric matrix to obtain the second equality. Since the closed-loop dynamics $(\dot{\tilde{q}},\dot{q},\dot{\omega})$ are autonomous, we proceed with LaSalle's invariance theorem. From (\ref{dLyap_control}), setting $\dot{V}\equiv 0$ leads to $z_\gamma \equiv0$, which in view of Lemma 3, implies that $\tilde{Q}\equiv(\pm1,\bold{0}_3^T)^T$ or $\tilde{Q}\equiv(0,v_{\gamma}^T)^T$. Since $\tilde{Q}$ is constant, one can conclude from (\ref{err_quat}) that $\omega-\beta\equiv 0$. Since $z_{\gamma}\equiv0$, it follows from (\ref{beta}) that $\beta\equiv0$, and consequently $\omega\equiv0$ since $\omega\equiv\beta$. Since $\omega\equiv0$, from (\ref{rot_dyn}), it follows that $\tau\equiv0$. Using this last fact, together with the fact that $z_\gamma\equiv0$, one can conclude from (\ref{feed}) that  $z_\rho\equiv0$. Again, invoking Lemma 3, one has $Q\equiv(\pm1,\bold{0}_3^T)^T$, or $Q\equiv(0,v_{\rho}^T)^T$. It is clear that the largest invariant set in $\Upsilon$, for the closed loop system, characterized by $\dot{V}=0$ is given by $\Omega_{inv}=\bigcup_{i=1}^4 \Omega_i$. \\
Since we showed that $\dot{V}\leq 0$, one has $V(\chi(t))\leq V(\chi(0))$, for all $t\geq 0$, which shows that $\Phi_1$ is a positively invariant sublevel set. Since $V(\chi)\geq 2\lambda_{min}(W_\gamma)\|\tilde{q}\|^2$ and  $V(\chi)\geq 2\lambda_{min}(W_\rho)\|q\|^2$, it is clear that $\Min_{\|\tilde{q}\|=1,~\|q\|=1}V(\chi)=2\min\{\lambda_{min}(W_\gamma),\lambda_{min}(W_\rho)\}$. Consequently, $\Phi_1 \subset \{\chi\in\Upsilon~|~\|\tilde{q}\|<1~,~\|q\|<1\}$. Therefore, $\Omega_2$, $\Omega_3$ and $\Omega_4$ do not belong to $\Phi_1$.
Finally, since the largest invariant set in $\Phi_1$, corresponding to $\dot{V}=0$, is nothing else but $\Omega_1$, the second claim of the theorem is proved.\\
Now, we need to show that  $\Omega_2$, $\Omega_3$ and $\Omega_4$ are unstable. First, let us focus our attention on $\Omega_2$ and $\Omega_3$ that belong to the manifold characterized by $(\tilde{q}_0=0,\omega=0)$. In fact, the closed loop dynamics of $\tilde{q}_0$ are given by $\dot{\tilde{q}}_0=(\tilde{q}^T W_{\gamma}\tilde{q})\tilde{q}_0-\frac{1}{2}\tilde{q}^T\hat{R}\omega$, which can be written, around $(\tilde{q}_0=0,\omega=0)$, as follows:
\begin{equation}
\dot{\tilde{q}}_0=\eta_\gamma \tilde{q}_0-\frac{1}{2}v_{\gamma}^T\hat{R}\omega,
\label{tilde_q0_dyn}
\end{equation}
where $\eta_\gamma:=v_{\gamma}^T W_{\gamma}v_{\gamma}$. Consider the following positive definite function: $V_0(\tilde{q}_0)=\frac{1}{2}\tilde{q}_0^2$, whose time derivative along the trajectories of (\ref{tilde_q0_dyn}) is given by
\begin{equation}
\begin{array}{rcl}
\dot{V}_0&=&\eta_\gamma\tilde{q}_0^2-\frac{1}{2}\tilde{q}_0v_{\gamma}^T\hat{R}\omega \geq \eta_\gamma\tilde{q}_0^2-\frac{1}{2}\|\omega\| |\tilde{q}_0|.
\end{array}
\end{equation}
Since $\tilde{q}_0$ and $\omega$ are bounded (as per (\ref{Lyap}) and (\ref{dLyap_control})), and $\tilde{q}_0=0$ is not an invariant manifold, it is clear that, in the neighborhood of $(\tilde{q}_0=0,\omega=0)$, there exists a finite parameter $k_\gamma>0$ such that $\frac{1}{2}\|\omega\| \leq k_\gamma |\tilde{q}_0|$. Consequently, one has $\dot{V}_0 \geq (\eta_\gamma-k_\gamma) \tilde{q}_0^2$.
Since $\eta_{\gamma}$ is an eigenvalue of $W_{\gamma}$, $\eta_{\gamma}\geq\lambda_{min}(W_{\gamma})>0$. Hence, $\dot{V}_0>0$ as long as $\lambda_{min}(W_{\gamma})>k_\gamma$ and $\tilde{q}_0\not=0$, which shows that $\Omega_2$ and $\Omega_3$ are unstable.\\
Now, let us show that $\Omega_4$ is unstable, using Chetaev arguments\footnote{The reader is referred to \cite{Khalil} for more details about Chetaev Theorem.}. To this end, we will show the instability of the manifold characterized by $(q_0=0,\omega=0)$. Let us define $\delta:= {q}^TJ\omega$, and consider the dynamics of ${q}_0$ and $\delta$ in the neighborhood of $\Omega_4$
\begin{equation}\label{linearization2}
\begin{array}{rcl}
\dot{q}_0&=&-\frac{1}{2}v_{\rho}^T \omega\\
\dot{\delta}&=&-2\eta_{\rho} q_0,\\
\end{array}
\end{equation}
where $\eta_{\rho}=v_{\rho}^TW_{\rho}v_{\rho}$. Let us consider the following Chetaev function: $\mathcal{V}(q_0,\delta)=-{q}_0\delta$.
Define the set $B_r=\{x:=(q_0,\delta)\in [-1~,~1]\times\mathbb{R}~|~\|x(t)\| < r \}$
where $0<r<1$. Let us also define a subset of $B_r$ where $\mathcal{V}>0$,  that is, $U_r=\{x\in B_r~| ~\mathcal{V}(x)>0\}$.
Note that $U_r$ is non-empty for all $0<r<1$, and $\mathcal{V}(0,0)=0$.  The time derivative of $\mathcal{V}$, in view of (\ref{linearization2}), is given by
\begin{equation}\label{der_chet}
 \begin{array}{rcl}
  \dot{\mathcal{V}}&=& 2\eta_{\rho} q_0^2+\frac{1}{2}\delta v_{\rho}^T\omega \geq (2\eta_{\rho}-\frac{1}{2}k_fk_q^2)q_0^2,
  \end{array}
  \end{equation}
where we used the fact that $|\delta|\leq k_f \|\omega\|$, with $\|J\|=k_f$, and the fact that, around the manifold characterized by $(q_0=0,\omega=0)$, with $q_0\not=0$, there exists a positive parameter $k_q$ such that $\|\omega\|\leq k_q |q_0|$. Since $\eta_{\rho}$ is an eigenvalue of $W_{\rho}$, $\eta_{\rho}\geq\lambda_{min}(W_{\rho})>0$. Therefore, picking  $\lambda_{min}(W_{\rho})>\frac{1}{4}k_fk_q^2=: k_{\rho}$, it follows that $\dot{V}>0$ for all $q_0\not=0$.\\
Picking the initial conditions $x(0)\in U_r$, $x(t)$ must leave $U_r$ since $\mathcal{V}(x)$ is bounded on $U_r$ and $\dot{\mathcal{V}}(x)>0$ everywhere in $U_r$. Since $\mathcal{V}(x(t))\geq \mathcal{V}(x(0))$,  $x(t)$ must leave $U_r$ through the circle $\|x\|=r$ and not through the edges $\mathcal{V}(x)=0$ (\textit{i.e.,} $\delta=0$ or $\tilde{q}_0=0$). Since this can happen for arbitrarily small $r$, the equilibrium $(q_0=0,\delta=0)$ is unstable and so is $\Omega_4$.\\
Finally, since the possible stable manifolds associated with the unstable equilibria belong to a set of Lebesgue measure zero in the state space, it is clear that $\Omega_1$ is almost globally attractive.
\end{proof}


\begin{remark}
For the sake of simplicity, the desired attitude has been taken as $I_3$. However, it is straightforward to extend the proposed control law to the case of an arbitrary constant desired attitude $R_d=\mathcal{R}(Q_d)$ by taking $z_\rho=\sum_{i=1}^n \rho_i S(R_d^Tr_i)b_i$.
\end{remark}

\subsection{Attitude stabilization using preconditioned vector measurements}
In this section, we will show almost global asymptotic stability of the desired equilibrium point without any conditions on the minimum eigenvalues of $W_\gamma$ and $W_\rho$. This results are made possible through an appropriate preconditioning of the inertial measurement vectors. \\
Let us assume that we have only two vector measurements $b_1,b_2\in\mathbb{R}^3$ corresponding, respectively, to two inertial vectors $r_1,r_2\in\mathbb{R}^3$ (which are assumed non-collinear).  Let us define the normalized (unit) vectors $v_1=\frac{r_1}{\|r_1\|}$, $v_2=\frac{r_1 \times r_2}{\|r_1 \times r_2\|}$ and  $v_3=\frac{(r_1 \times r_2)\times r_1}{\|(r_1 \times r_2)\times r_1\|}$. It is clear that $\{v_1,v_2,v_3\}$ forms a unit orthonormal basis.
Let us define $u_1=\frac{b_1}{\|r_1\|}$, $u_2=\frac{b_1 \times b_2}{\|r_1 \times r_2\|}$ and  $u_3=\frac{(b_1 \times b_2)\times b_1}{\|(r_1 \times r_2)\times r_1\|}$. Using the fact that $b_1=R^Tr_1$ and $b_2=R^Tr_2$, one can easily show that $u_i=R^Tv_i$, for $i\in\{1,2,3\}$. Let us define  $z_\gamma:=\gamma\sum_{i=1}^3 S(\hat{u}_i)u_i$, $z_\rho:=\rho\sum_{i=1}^3  S(v_i)u_i$, $\hat{u}_i=\mathcal{R}(\hat{Q})^Tv_i$, $W_{\gamma}:=-\gamma \sum_{i=1}^{3}  S(v_i)^2$ and $W_{\rho}:=-\rho \sum_{i=1}^{3}  S(v_i)^2$, with $\gamma >0$, $\rho>0$.

The choice of $v_1$, $v_2$ and $v_3$ leads to $W_{\gamma}=2\gamma I_3$ and  $W_{\rho}=2\rho I_3$. This can be shown as follows: Since $\{v_1,v_2,v_3\}$ is a unit orthonormal basis, one has $M_\gamma:=\gamma \sum_{i=1}^3 v_iv_i^T=\gamma I_3$ since $M_\gamma v_i=\gamma v_i$, $i=1,2,3$. Furthermore, since $W_\gamma=\mu I_3-M_\gamma$ with $\mu=\gamma \sum_{i=1}^3  v_i^Tv_i=3\gamma$, it is clear that  $W_{\gamma}=2\gamma I_3$. \\
In this case, using Lemma 1 and the new values of $W_\gamma$ and $W_\rho$, it follows that
\begin{equation}\label{z_gr}
z_\gamma=-4\gamma\hat{R}^T\tilde{q}_0\tilde{q},~~~ z_\rho=-4\rho q_0q.
\end{equation}

Now, one can state our second theorem:
\begin{theorem}
Consider system (\ref{quat_dyn})-(\ref{rot_dyn}) under the control law (\ref{feed}) and the input of the auxiliary system (\ref{obser1}) given in (\ref{beta}), with $z_\gamma$ and $z_\rho$ as defined in subsection IV-C. Then,
\begin{itemize}
\item[$1)$] The equilibria of the closed-loop system are given by
     $\Psi_1=(0,0,0)$, $\Psi_2=(\tilde{q}\in \mathbb{S}^2,0,0)$, $\Psi_3=(\tilde{q}\in \mathbb{S}^2,q\in \mathbb{S}^2,0)$ and $\Psi_4=(0,q\in \mathbb{S}^2,0)$.
 \item[$2)$] The equilibria $\Psi_2$, $\Psi_3$ and $\Psi_4$ are unstable, and $\Psi_1$ is almost globally asymptotically stable.
\end{itemize}
\end{theorem}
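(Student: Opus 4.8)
The plan is to mirror the proof of Theorem 1 wherever the structure is unchanged and to replace only the two instability arguments, since that is where the preconditioning pays off. I would reuse the Lyapunov function $V=2\tilde{q}^TW_\gamma\tilde{q}+2q^TW_\rho q+\tfrac12\omega^TJ\omega=4\gamma\|\tilde{q}\|^2+4\rho\|q\|^2+\tfrac12\omega^TJ\omega$, now with the preconditioned $W_\gamma=2\gamma I_3$ and $W_\rho=2\rho I_3$. Using (\ref{z_gr}) and the orthogonality of $\hat{R}$, its derivative along the closed loop is $\dot{V}=-z_\gamma^Tz_\gamma=-16\gamma^2\tilde{q}_0^2\|\tilde{q}\|^2\le 0$. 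The LaSalle chain is then verbatim that of Theorem 1: $\dot{V}\equiv 0$ forces $z_\gamma\equiv 0$, hence $\tilde{q}_0\tilde{q}\equiv 0$; constancy of $\tilde{Q}$ together with $\beta=-z_\gamma\equiv 0$ gives $\omega\equiv 0$, and then $\tau\equiv 0$ forces $z_\rho\equiv 0$, i.e. $q_0q\equiv 0$. The only new feature is that, since every unit vector is an eigenvector of $2\gamma I_3$ and of $2\rho I_3$, the admissible $v_\gamma$ and $v_\rho$ now sweep the whole sphere; this is exactly what turns the discrete sets $\Omega_2,\Omega_3,\Omega_4$ of Theorem 1 into the connected families $\Psi_2=(\tilde{q}\in\mathbb{S}^2,0,0)$, $\Psi_3=(\tilde{q}\in\mathbb{S}^2,q\in\mathbb{S}^2,0)$ and $\Psi_4=(0,q\in\mathbb{S}^2,0)$, establishing claim 1.

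For claim 2 the decisive and only genuinely new step is the instability of $\Psi_2,\Psi_3,\Psi_4$ \emph{for every} $\gamma,\rho>0$. The scalar instability functions used in Theorem 1 ($\tfrac12\tilde{q}_0^2$ and $-q_0\delta$) are, I expect, too crude here: by bounding the coupling as $\|\omega\|\le k|\tilde{q}_0|$ they reintroduce a condition of the type $\lambda_{min}(W_\gamma)>k_\gamma$, which is precisely what preconditioning is meant to remove. Instead I would linearize the autonomous closed loop about each equilibrium manifold and extract a planar sub-block carrying a positive eigenvalue. At $\Psi_2$ (and likewise $\Psi_3$) we have $\tilde{q}_0=0$, $\tilde{q}\approx v_\gamma$, $\omega\approx 0$, and the scalar equation $\dot{\tilde{q}}_0=2\gamma\|\tilde{q}\|^2\tilde{q}_0-\tfrac12\tilde{q}^T\hat{R}\omega$ involves $\omega$ only through $\xi:=v_\gamma^T\hat{R}\omega$, where $\eta_\gamma=v_\gamma^TW_\gamma v_\gamma=2\gamma$. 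Pairing it with $J\dot\omega\approx z_\gamma=-4\gamma\tilde{q}_0\hat{R}^Tv_\gamma$ gives, to first order, $\dot{\tilde{q}}_0=2\gamma\tilde{q}_0-\tfrac12\xi$ and $\dot\xi=-4\gamma a\,\tilde{q}_0$, with $a:=v_\gamma^T\hat{R}J^{-1}\hat{R}^Tv_\gamma>0$; this $2\times 2$ matrix has determinant $-2\gamma a<0$, hence a real positive eigenvalue $\gamma+\sqrt{\gamma^2+2\gamma a}$ for every $\gamma>0$. The same construction at $\Psi_4$, with $\tilde{q}=0$ so $\hat{R}=R$, $q\approx v_\rho$, $\eta_\rho=2\rho$ and $\zeta:=v_\rho^T\omega$, produces $\dot{q}_0=-\tfrac12\zeta$ and $\dot\zeta=-4\rho b\,q_0$ with $b:=v_\rho^TJ^{-1}v_\rho>0$, whose spectrum $\pm\sqrt{2\rho b}$ is again unstable for every $\rho>0$. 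No lower bound on any eigenvalue of $W_\gamma$ or $W_\rho$ enters.

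To finish claim 2, since the stable manifolds of the unstable equilibria $\Psi_2,\Psi_3,\Psi_4$ lie in a set of Lebesgue measure zero, the LaSalle analysis above (with $V$ radially unbounded and $\dot{V}\le 0$) drives almost every trajectory to $\Psi_1$, while Lyapunov stability of $\Psi_1$ follows from $V$ itself, because $\Psi_1$ is the only invariant set meeting $\{\dot{V}=0\}$ in a neighborhood of the origin; together these give almost global asymptotic stability. I expect the real work to sit in the instability step: justifying that the terms discarded in the linearization — the contribution of $z_\rho$ and the drift of $\hat{R}$, which couple the planar unstable block to the remaining \emph{neutral} directions (the oscillatory $(q,\omega)$ pair at $\Psi_2$, the slaved $\tilde{q}$ mode at $\Psi_4$), together with the coordinate singularity of $q_0$ on $\|q\|=1$ — do not destroy the transverse unstable eigenvalue, and that the assertion is one of instability of a whole manifold of equilibria rather than of an isolated point.
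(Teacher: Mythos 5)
Your LaSalle part (the computation $\dot{V}=-z_\gamma^Tz_\gamma=-16\gamma^2\tilde{q}_0^2\|\tilde{q}\|^2$, the identification of the equilibrium families $\Psi_1,\dots,\Psi_4$, the local stability of $\Psi_1$, and the closing measure-zero step) coincides with the paper's proof. The gap is in the instability of $\Psi_2,\Psi_3,\Psi_4$, which is the entire point of Theorem 2 (instability for \emph{every} $\gamma,\rho>0$). Your planar $2\times 2$ blocks are projections of the full Jacobian, not invariant sub-blocks of it: at a point of $\Psi_2$ one has $q_0\approx\pm1$ while $q$ is itself a first-order perturbation, so $z_\rho=-4\rho q_0 q$ enters $\dot{\omega}$ at first order and contributes the term $-4\rho q_0\, v_\gamma^T\hat{R}J^{-1}q$ to $\dot{\xi}$; symmetrically, at $\Psi_4$ the term $z_\gamma=-4\gamma\tilde{q}_0\hat{R}^T\tilde{q}$ contributes $-4\gamma\tilde{q}_0\, v_\rho^TJ^{-1}\hat{R}^T\tilde{q}$ to $\dot{\zeta}$. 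Since the couplings are not triangular, the eigenvalues of your $2\times 2$ matrices need not be eigenvalues of the full linearization, and a negative determinant of a projected block proves nothing by itself. You explicitly acknowledge this (``I expect the real work to sit in the instability step\dots''), but you never do that work; as a check, for $J=I_3$, $\hat{R}=I_3$ the relevant closed subsystem at $\Psi_2$ is three-dimensional, in $(\tilde{q}_0, v^Tq, v^T\omega)$, with characteristic polynomial $\lambda^3-2\gamma\lambda^2+(2\rho-2\gamma)\lambda-4\gamma\rho$, which does have a positive real root, but the general argument for arbitrary $J$ and $\hat{R}$ is exactly what is missing from your proposal.

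The paper avoids linearization altogether, via an argument your proposal overlooks: because the preconditioning gives $W_\gamma=2\gamma I_3$ and $W_\rho=2\rho I_3$, the Lyapunov function evaluated on the spurious equilibria attains the \emph{maximum} of the corresponding quadratic term ($V_{\Psi_2}=4\gamma$ is the supremum of $4\gamma\|\tilde{q}\|^2$ over $D$, and similarly for $\Psi_3,\Psi_4$). Hence any perturbation pushing $\|\tilde{q}\|$ (resp.\ $\|q\|$) strictly below $1$ yields $V(0)$ strictly below the value of $V$ on the equilibrium set; since $\dot{V}\leq 0$, the trajectory can never return to that level, and LaSalle forces it to equilibria with $\tilde{q}=0$ (resp.\ $q=0$), i.e.\ out of any small neighborhood. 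This monotonicity argument needs no gain condition and no spectral computation --- it is precisely what the preconditioning buys, namely placing $\Psi_2,\Psi_3,\Psi_4$ at the top of the Lyapunov function. Your route could likely be completed by carrying out the full coupled linearization, but as written the decisive claim of the theorem is not proved.
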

\begin{proof}
As in the proof of Theorem 1, one can show that the closed-loop system is autonomous. Consider the positive definite function (\ref{Lyap}) which, due to the fact that $W_{\gamma}=2\gamma I_3$ and $W_{\rho}=2\rho I_3$,  can be written as
\begin{equation}\label{lyap_Th2}
V(\tilde{q},q,\omega)=4\gamma\tilde{q}^T\tilde{q}+4\rho q^Tq+\frac{1}{2}\omega^T J \omega=\frac{1}{2}\left(\gamma\Sum_{i=1}^{3} \tilde{u}_i^T  \tilde{u}_i+\rho \Sum_{i=1}^{3} e_i^T e_i+\omega^T J \omega \right),
\end{equation}
where $\tilde{u}_i:=\hat{u}_i-u_i$ and $e_i:=v_i-u_i$. Using the fact that $\dot{\tilde{u}}_i= S(\hat{u}_i)(\beta-\omega)+S(\tilde{u}_i)\omega$ and $\dot{e}_i=-S(u_i)\omega$, the time derivative of (\ref{lyap_Th2}), along the closed-loop system trajectories, is given by
\begin{equation}
  \dot{V}= -z_\gamma^T z_\gamma=-16\gamma^2\tilde{q}^T(I_3-\tilde{q}\tilde{q}^T)\tilde{q}=-16\gamma^2\|\tilde{q}\|^2(1-\|\tilde{q}\|^2),
  \end{equation}
which vanishes at $\tilde{q}=0$ or $\|\tilde{q}\|=1$. Similar steps as in the proof of Theorem 1, in view of (\ref{z_gr}), can be applied to conclude that the largest invariant set in $\Upsilon$, characterized by $\dot{V}=0$, is given by $\Psi_{inv}=\bigcup_{i=1}^4 \Psi_i$.
Again, as in the proof of Theorem 1, one can show that the equilibrium $\Psi_1$ is asymptotically stable with the domain of attraction containing the domain $\Phi_2=\left\{\chi\in \Upsilon~|~\chi^TP\chi < 4\min\{\gamma,\rho\} \right\}\subset \{\chi\in\Upsilon~|~\|\tilde{q}\|<1~,~\|q\|<1\}$.\\
Now, we need to show that $\Psi_2$, $\Psi_3$ and $\Psi_4$ are unstable. In fact, at the equilibria characterized by $\Psi_2$, the Lyapunov function becomes  $V_{\Psi 2}=4\gamma v^Tv= 4\gamma $,
where $v$ is an arbitrary vector living in the unit 2-sphere $\mathbb{S}^2$.
A small disturbance $\epsilon \in D$ acting on $v$, in the neighborhood of $\Psi_2$, \textit{i.e.,} $\tilde{q}= v + \epsilon$, such that $0\leq\|\tilde{q}\|<1$,  would lead to $V=4\gamma \|\tilde{q}\|^2 <4\gamma=V_{\Psi 2}$.
Since $V$ is non-increasing, it is clear that $\Psi_2$ is unstable. Similar arguments can be used to show that $\Psi_3$ and $\Psi_4$ are unstable as well, and hence omitted here.
Finally, almost global attractivity of $\Psi_1$ follows from the previous results and the fact that $\mathbb{S}^2$ has Lebesgue measure zero on $D$.
\end{proof}

\begin{remark}
The proposed approach guarantees \textit{a priori} boundedness of the control input, that is, $\|\tau\|\leq \sum_{i=1}^n (\gamma_i+\rho_i) \|r_i\|^2$ in Theorem 1, and $\|\tau\|\leq 3(\gamma+\rho)$ in Theorem 2.
\end{remark}

\begin{remark}
The control law (\ref{feed}) can be written as $\tau (\tilde{Q},Q)=-2\hat{R}^T(\tilde{q}_0I-S(\tilde{q}))W_{\gamma}\tilde{q}-2(q_0I-S(q))W_{\rho} q$,
with $\hat{R}:=\mathcal{R}(\tilde{Q})^T\mathcal{R}(Q)$. Note that $\tau (\tilde{Q},Q)=\tau (-\tilde{Q},-Q)$, which confirms the fact that the proposed control strategy is not affected by the sign ambiguity of the unit-quaternion representation since our approach does not rely on any attitude reconstruction.
\end{remark}

\begin{remark}\label{remIMU}
In practical applications involving small scale aerial vehicles, for instance, it is customary to equip the vehicle with an IMU composed of accelerometers, magnetometers and gyroscopes. The gyroscopes provide the angular velocity $\omega$, the magnetometers provide a vector measurement of the earth magnetic field in the body attached frame $m_{\mathcal{B}}$, which is related to the earth's magnetic field $m_{\mathcal{I}}$ expressed in the inertial frame through $\bar{m}_{\mathcal{B}}=Q^{-1}\odot \bar{m}_{\mathcal{I}} \odot Q$. The accelerometers provide a vector measurement of the apparent acceleration $ a_{\mathcal{B}}$ in the body attached frame, which is related to the acceleration $ a_{\mathcal{I}}$ expressed in the inertial frame through $\bar{a}_{\mathcal{B}}=Q^{-1}\odot \bar{a}_{\mathcal{I}} \odot Q$. In the case of quasi-stationary flights (\textit{i.e.,} $||\dot{v}|| \ll g$), the acceleration expressed in the inertial frame is given by $a_{\mathcal{I}}=[0,0,-g]^T$. The attitude stabilization controllers in Theorem 1 and Theorem 2, could be implemented using the vector measurements obtained from the IMU ($a_{\mathcal{B}}, m_{\mathcal{B}}$), taking $r_1=a_{\mathcal{I}}$, $b_1=a_{\mathcal{B}}$, $r_2=m_{\mathcal{I}}$ and $b_2=m_{\mathcal{B}}$.
\end{remark}

\section{Simulation Results}
In this section, we present some simulation results showing the effectiveness of the proposed approach. The inertia matrix has been taken as $J=\mbox{diag}(0.5,0.5,1)$ and the inertial vectors as $r_1=[0,0,1]^T$ and $r_2=[1,0,1]^T$. The control parameters have been taken as $\gamma_1=\gamma_2=10$ and $\rho_1=\rho_2=0.5$. We performed two simulation tests, using the control law in Theorem 1, to show the performance of the proposed control scheme and confirm the avoidance of the unwinding phenomenon.
In the fist simulation test, we considered the following initial conditions: $\omega(0)=[0,0,0]^T$, $Q(0)=[0.8,0,0,0.6]^T$ and $\hat{Q}(0)=[1,0,0,0]^T$. In the second simulation test, we considered the same initial conditions except for $Q$, where we started the scalar part of the unit quaternion from a negative value, \textit{i.e.,}  $Q(0)=[-0.8,0,0,0.6]^T$.
Figure 1 and Figure 3 show the evolution of the three components of the angular velocity with respect to time for Test 1 and Test 2, respectively. Figure 2 and Figure 4 show the evolution of the unit-quaternion with respect to time for Test 1 and Test 2, respectively. We can clearly see that the unwinding phenomenon is avoided since both equilibria $(q_0=1,q=0,\omega=0)$ and $(q_0=-1,q=0,\omega=0)$ are asymptotically stable.
\begin{figure}[t]
\begin{minipage}[b]{0.5\columnwidth}
\centering
\includegraphics[height=5cm]{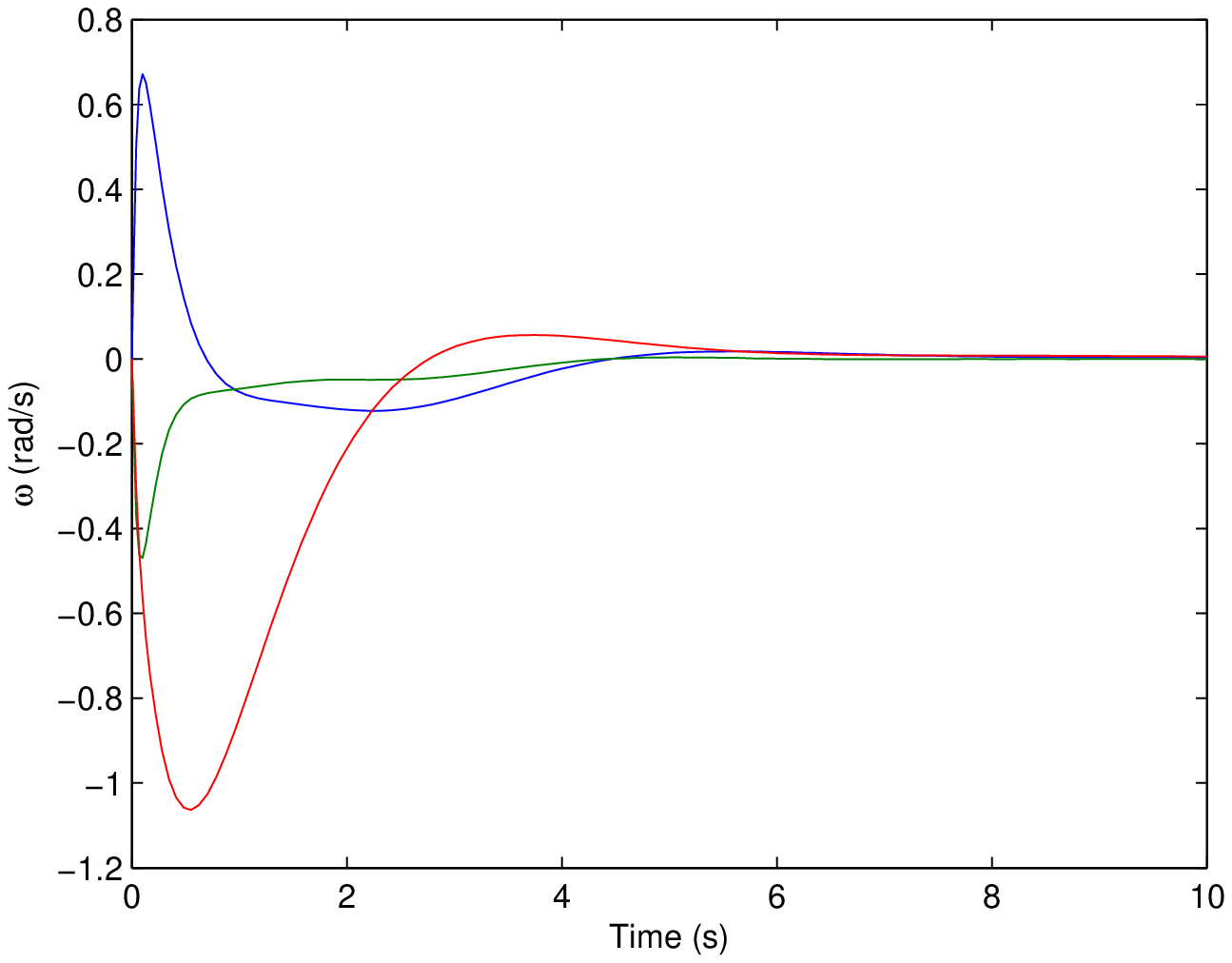}
\caption{\footnotesize  Test 1: Angular velocity vs. time}
\end{minipage}
\hfill
\begin{minipage}[b]{0.5\columnwidth}
\centering
\includegraphics[height=5cm]{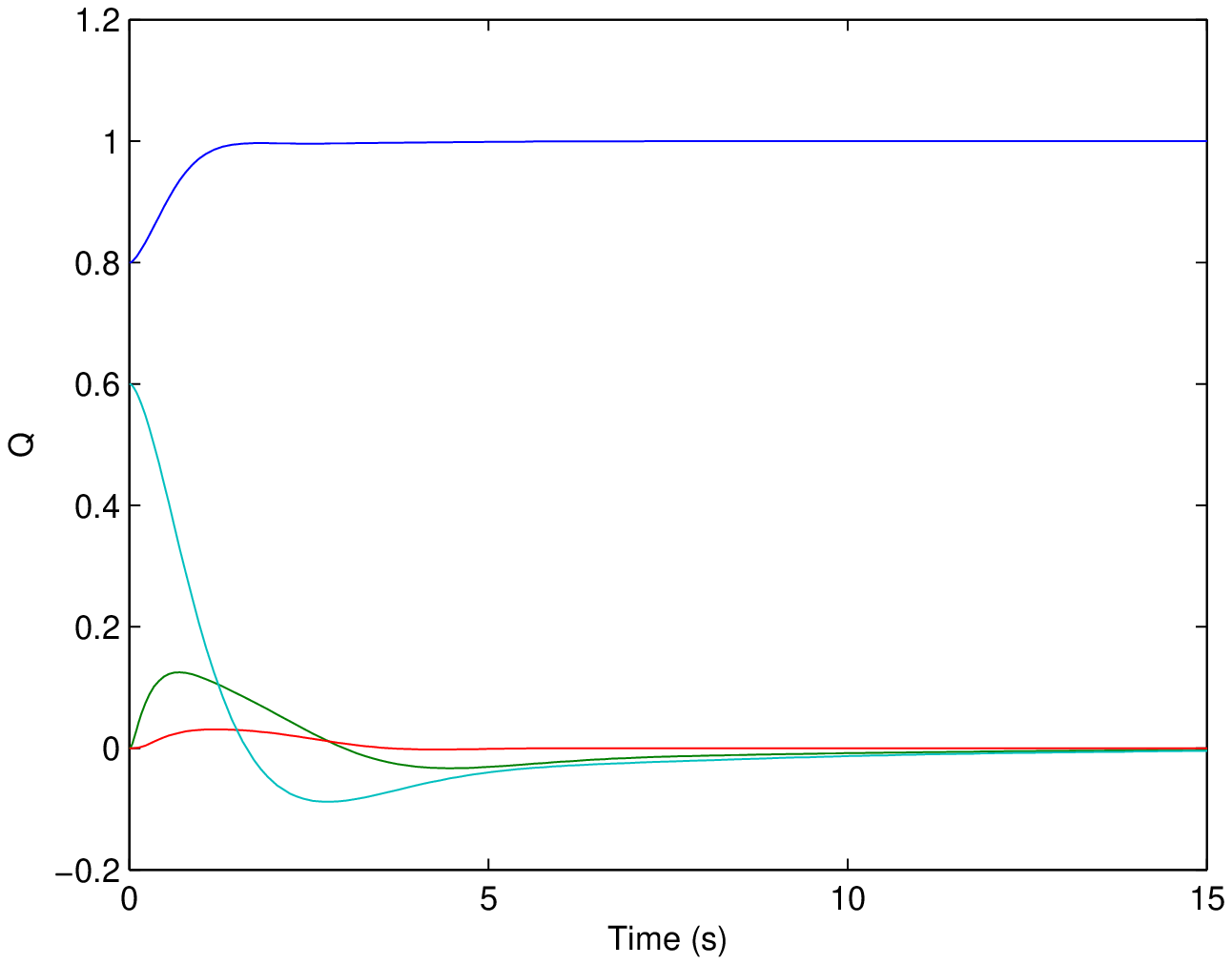}
\caption{\footnotesize Test 1: Unit-quaternion vs. time}
\end{minipage}
\end{figure}

\begin{figure}[t]
\begin{minipage}[b]{0.5\columnwidth}
\centering
\includegraphics[height=5cm]{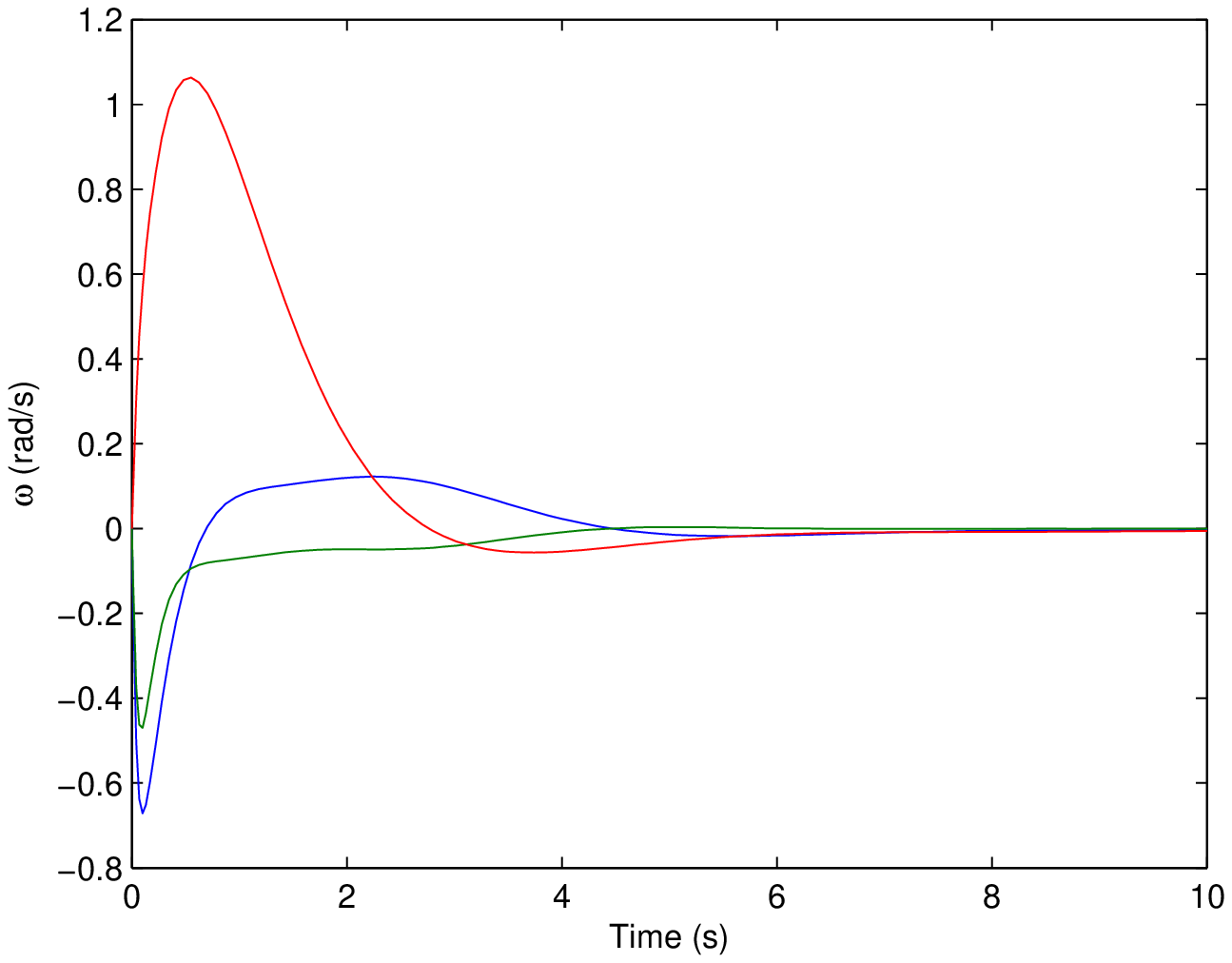}
\caption{\footnotesize  Test 2: Angular velocity vs. time}
\end{minipage}
\hfill
\begin{minipage}[b]{0.5\columnwidth}
\centering
\includegraphics[height=5cm]{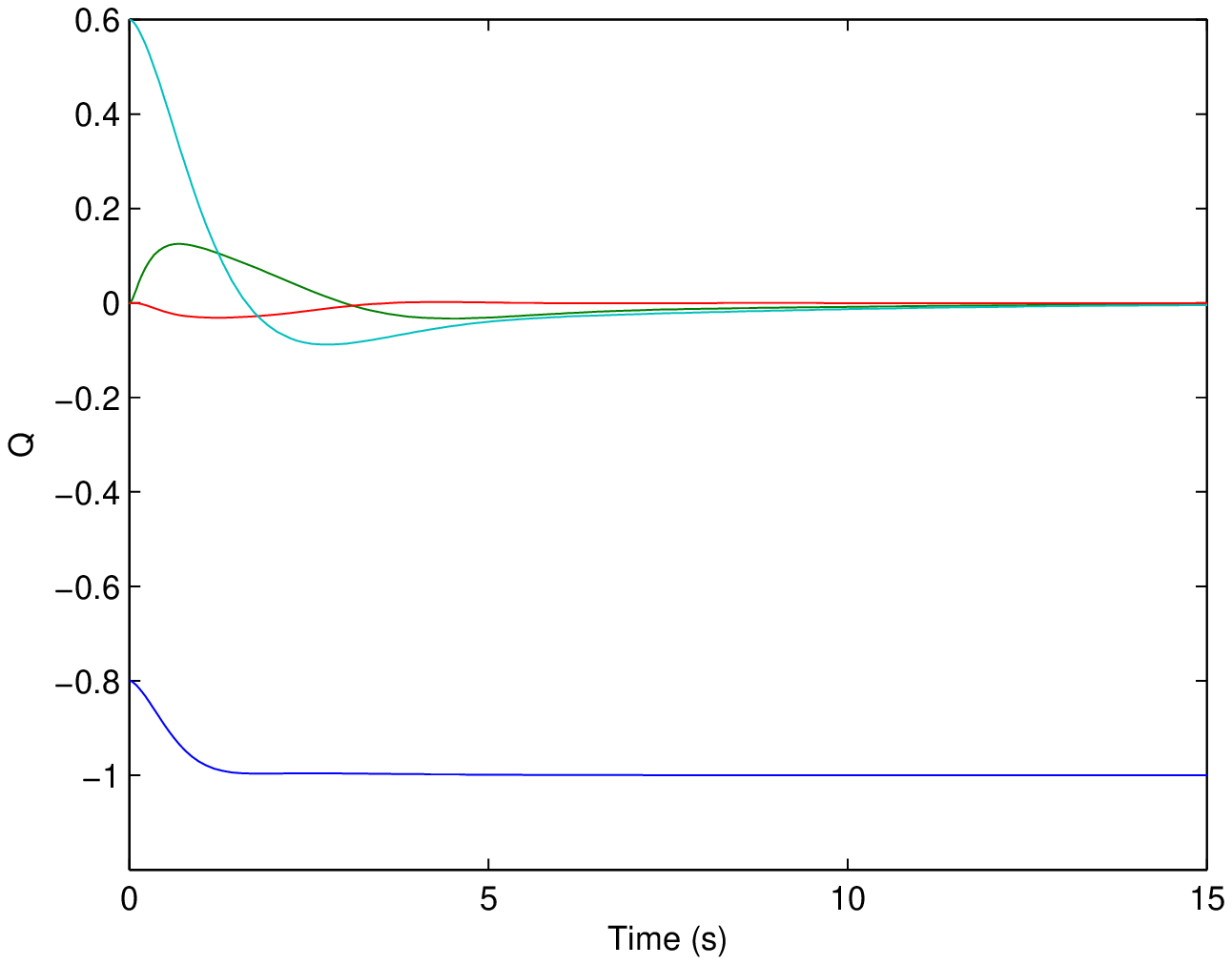}
\caption{\footnotesize Test 2: Unit-quaternion vs. time}
\end{minipage}
\end{figure}

\section{Concluding discussion}
 It is important to point out that the present work was motivated by the fact that the existing attitude control schemes (without angular velocity measurement) use explicitly the orientation of the rigid body which is not directly measurable by any means. Therefore, an additional algorithm is required for attitude reconstruction (or estimation) using the available measurements (\textit{e.g.,} IMU measurements). Reconstructing the attitude from inertial vector measurements via static optimization algorithms is often hampered by two problems: 1) Sensitivity to noise; 2) Non-robustness due to a memoryless (random) selection among the two possible quaternion solutions $Q$ and $-Q$ (which represent exactly the same orientation) as pointed out in \cite{Mayhew}. On the other hand, dynamic attitude estimation algorithms of Kalman-type or complementary-filtering-type require the use of the angular velocity measurement. Hence, the integration of such algorithms with a velocity-free attitude controller is questionable since the angular velocity is assumed to be unavailable. Besides this fundamental circular-reasoning-like problem, the stability of the overall system (observer-controller) is not generally guaranteed as the separation principle does not generally hold for nonlinear systems.\\
 Motivated by this facts, we proposed a velocity-free attitude stabilization scheme that does not require the orientation reconstruction. To the best of our knowledge, this work is the first incorporating directly vector measurements in the design of velocity-free attitude controllers without attitude reconstruction.\\
 The second point that is worth mentioning, is the fact that our control scheme guarantees almost global asymptotic stability of the equilibrium points characterized by $(q_0=\pm1,q=0,\omega=0)$ which clearly avoids the well-known unwinding phenomenon (as shown in the simulation results). Usually unwinding is generated when the equilibria at $q_0=1$ and $q_0=-1$ are not both stable, which causes some trajectories, initialized around the unstable equilibrium, to undergo an unnecessary motion before joining the stable equilibrium, while both equilibria represent the same physical orientation.

\end{document}